\newtheorem{thm}{Theorem}
\newtheorem{prop}{Proposition}
\newtheorem{lemma}{Lemma}
\def\eps{\varepsilon}
\begin{document}


\title{Homogenization of non-autonomous operators of convolution type
in periodic media.}

\author{A. Piatnitski$^{1,2}$ \  and \ E. Zhizhina$^{2,1}$\\[4mm]
{$^1$\ \small The Arctic University of Norway, UiT, campus Narvik, Norway}\\
 {\ \,  $^2$  \small Institute for Information Transmission Problems of RAS, Moscow, Russia}\\
{\small emails:} \ {\small\tt apiatnitski@gmail.com},\ {\small\tt ejj@iitp.ru}}
\date{}





\maketitle

\noindent
{\bf Keywords:}\ {convolution type operators, periodic homogenization, homogenization of non-autonomous
equations.}

\medskip\noindent
{\bf MSC}:\ {35B27, 45E10, 60J76, 45M15}.
\begin{abstract}
The paper deals with periodic homogenization problem for a  para\-bo\-lic equation whose elliptic part is a convolution type
operator with rapidly oscillating coefficients.  It is assumed that the coefficients  are rapidly oscillating periodic functions  both
in spatial and temporal variables and that the scaling is diffusive that is the scaling factor of the temporal variable
is equal to the square of the scaling factor of the spatial variable.
Under the assumption that the convolution kernel has a finite second moment and that the operator is symmetric in spatial
variables we show that the studied equation admits homogenization and prove that the limit operator is a second order
differential parabolic operator with constant coefficients.
\end{abstract}

\section{Introduction}
\label{s_intro}
The goal of this work is to study the limit behaviour of solutions of a Cauchy problem for nonlocal equations
of the form
\begin{equation}\label{ori_eqn}
\begin{array}{c}
\displaystyle
\partial_t u^\eps(x,t)=\mathcal{A}^\eps(t) u^\eps(x,t)\qquad \hbox{in }\mathbb R^d\times[0,T],\\[2mm]
u^\eps(x,0)=u_0(x),\quad u_0\in L^2(\mathbb R^d),
\end{array}
\end{equation}
where $\eps>0$ is a small positive parameter that characterize the microscopic length scale of the medium,
and $\mathcal{A}^\eps(t)$ are  convolution type nonlocal operators defined by
\begin{equation}\label{ori_oper}
\big(\mathcal{A}^\eps(t) v\big)(x,t)=\frac1{\eps^{d+2}}\int_{\mathbb R^d}a\Big(\frac{x-y}\eps\Big)
\mu\Big(\frac{x}\eps,\frac y\eps, \
\frac t{\eps^2}\Big)\big(v(y)-v(x)\big)dy,
\end{equation}
where the kernel $a(\cdot)$ is a non-negative, intergable  function that has finite second moments,
and the function $\mu$ is strictly positive and bounded. We suppose furthermore the following symmetry conditions:
$a(-z)=a(z)$ for all $z\in
\mathbb R^d$, and $\mu(\xi,\eta,s)=\mu(\eta,\xi,s)$ for all  $\xi,\,\eta\in \mathbb R^d$ and $s\in\mathbb R^+$.
Also, it is assumed that the finction $\mu$ is periodic in all variables. It means that the medium has a periodic microstructure,
and its evolution in time is also periodic.  The detailed conditions on the operator $\mathcal{A}^\eps$ are formulated in
the beginning of the next section.

Convolution type operators with an integrable kernel are used in population dynamics, for instance to describe the evolution
of a population density, in some models of porous media, in financial mathematics, and in some other fields.
The presence of function $\mu$ is due to inhomogeneity of the medium, this function represents the local characteristics
of the medium.

In the paper we consider a model periodic problem and assume that the environment has a periodic microstructure
and that the evolution of its characteristics in time is also periodic.

%

Differential operators with rapidly oscillating coefficients and the corresponding homogenization problems have been actively
 studied since 70s of the last century. There is a vast literature devoted to this topic. We refer here just two monographs,
 \cite{BLP} and \cite{JKO}.


Recent time various homogenization problems for nonlocal operators attract the attention of many mathematicians.
The operators in which a nonlocal operator is a perturbation of a local elliptic operator were considered in
 \cite{San16} and \cite{Schw10}.  The works \cite{Ari09}, \cite{KPZ19} and \cite{RhVa09} deal with homogenization of L\'evy type
 operators with non-integrable kernel, the limit operator being a L\'evy type operator.


In the case of equations with time independent coefficients homogenization results for problem  \eqref{ori_eqn} were
obtained in our previous works \cite{PiaZhi17}, \cite{PZ19}. It was shown that the effective equation is a second order
differential parabolic equation with constant coefficients. In the non-symmetric case homogenization takes place in moving
coordinates, see \cite{PZ19}.


To our best knowledge homogenization problems for nonlocal convolution type  operators with time-dependent coefficients
have not been studied in the existing literature.


The goal of this work is to show that problem  \eqref{ori_eqn}, \eqref{ori_oper} admits homogenization, to describe the homogenized
model and to justify the convergence. We will show that the homogenized equation is a second order parabolic equation with constant coefficients, 
the effective diffusion matrix is defined in terms of solutions to auxiliary periodic problems on $(d+1)$-dimensional torus.


Our approach is based on constructing the three main  terms of the asymptotic expansion of a solution to problem \eqref{ori_eqn},
the initial condition being sufficiently smooth. All the terms except for the first one contain correctors which are introduced
as solutions of auxiliary cell problems. These problems are non-standard because periodic boundary conditions are imposed
not only in spatial variables but also in time.  
The mentioned  non-standard cell problems 
form the main novelty of this work.


\section{Main assumptions and the result}
\label{s_setup}

We assume that the following conditions are fulfilled:
\begin{itemize}
  \item [$\bf C_1$]  Non-negativity and intergability of the convolution kernel:
  $$
  a(z)\geqslant 0\quad\hbox{for all }z\in\mathbb R^d, \qquad \int_{\mathbb R^d}a(z)dz=1.
  $$
  \item  [$\bf C_2$]  Finiteness of the second moments:
  $$
   \int_{\mathbb R^d}|z|^2a(z)dz<\infty.
  $$
   \item [$\bf C_3$]  Uniform ellipticity: there exist  $\mu^->0$ and $\mu^+>0$ such that
  $$
  \mu^-\leqslant\mu(\xi,\eta,s)\leqslant\mu^+\ \ \ \hbox{ for all }\xi,\,\eta, \,s.
  $$
   \item [$\bf C_4$]  Periodicity: the function $\mu=\mu(\xi,\eta,s)$ is periodic in all variables $\xi$, $\eta$ and $s$.
   Without loss of generality we assume that the period equals one for each coordinate direction.
  \item [$\bf C_5$] Symmetry:
  $$
  a(-z)=a(z)\ \ \hbox{for all }z\in\mathbb R^d,\qquad \mu(z,\xi,s)=\mu(\xi,z,s) \ \ \hbox{for all }z,\,\xi,\, s.
  $$
\end{itemize}

Consider a Cauchy problem of the form
\begin{equation}\label{gener_eqn}
\begin{array}{c}
\displaystyle
\partial_t v^\eps(x,t)=\mathcal{A}^\eps(t) v^\eps(x,t)+f(x,t)\qquad \hbox{in }\mathbb R^d\times[0,T],\\[2mm]
v^\eps(x,0)=v_0(x),
\end{array}
\end{equation}
where $f\in L^2(\mathbb R^d\times(0,T))$, and $v_0\in L^2(\mathbb R^d)$. We recall that the operator $\mathcal{A}^\eps(t)$  has been defined in \eqref{ori_oper}.

According to the Schur lemma for integral operators, see \cite{Gra}, for any $t\in\mathbb R$ the operator $\mathcal{A}^\eps(t)$ is bounded in $L^2(\mathbb R^d)$,
moreover,  $\|\mathcal{A}^\eps(t)\|_{\mathcal{L}(L^2(\mathbb R^d),L^2(\mathbb R^d))}\leqslant2\eps^{-2}\mu^+ $ for
any $t\in\mathbb R$. By the standard arguments of the theory of parabolic equations this implies that for each $\eps>0$ problem
\eqref{gener_eqn}  has a unique solution in the space $L^\infty(0,T; L^2(\mathbb R^d))$.


For arbitrary functions $v$ and $w$ from $L^2(\mathbb R^d)$ denote by $(v,w)$ their inner product in $L^2(\mathbb R^d)$.
Also for the sake of brevity we use the notation $\mu^\eps(x,y,t)=\mu\big(\frac x\eps,\frac y\eps,
\frac t{\eps^2}\big)$ and $a^\eps(z)=a\big(\frac z\eps\big)$.

The main result of this work reads
\begin{thm}\label{thm_mainsetup}
There exists a positive definite constant matrix $a^{\rm eff}$ such that for any $u_0\in L^2(\mathbb R^d)$
a solution of problem  \eqref{ori_eqn}--\eqref{ori_oper} converges, as $\eps\to0$, to a solution of the Cauchy problem
$$
\partial_t u=\mathrm{div}\big(a^{\rm eff}\nabla u\big),\qquad u(x,0)=u_0(x).
$$
For the definition of matrix $a^{\rm eff}$ see Section \ref{s_cellpbm}.
\end{thm}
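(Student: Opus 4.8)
The plan is to derive the homogenized problem through a formal two-scale asymptotic expansion and then to justify it by an energy estimate resting on the dissipativity of the operators $\mathcal{A}^\eps(t)$. After the substitution $y=x-\eps z$ the operator reads
\[
(\mathcal{A}^\eps(t)v)(x)=\frac1{\eps^{2}}\int_{\mathbb R^d}a(z)\,\mu\Big(\tfrac x\eps,\tfrac x\eps-z,\tfrac t{\eps^2}\Big)\big(v(x-\eps z)-v(x)\big)\,dz ,
\]
so the natural fast variables are $\xi=x/\eps$ and $s=t/\eps^2$, and the relevant operator on the torus is the bounded nonlocal operator
\[
(\mathcal{L}(s)w)(\xi)=\int_{\mathbb R^d}a(z)\,\mu(\xi,\xi-z,s)\big(w(\xi-z)-w(\xi)\big)\,dz
\]
acting on functions that are one-periodic in $\xi$, with $s$ entering as a parameter. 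I look for an expansion $u^\eps\approx u_0(x,t)+\eps\,u_1(x,\tfrac x\eps,t,\tfrac t{\eps^2})+\eps^2u_2(x,\tfrac x\eps,t,\tfrac t{\eps^2})$ with $u_1,u_2$ one-periodic in the fast variables, using that $\partial_t$ applied to such a profile produces the singular term $\eps^{-2}\partial_s$, while the nonlocal operator shifts the fast variable by $z$ and the slow variable by $\eps z$, the latter being expanded by Taylor's formula.

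Collecting equal powers of $\eps$ in $\partial_t u^\eps=\mathcal{A}^\eps u^\eps$ gives a hierarchy. The order $\eps^{-2}$ identity is $(\partial_s-\mathcal{L}(s))u_0=0$, which forces $u_0=u_0(x,t)$ to be independent of the fast variables, since as a function of $(\xi,s)$ it must be constant. At order $\eps^{-1}$ one gets the first cell problem
\[
(\partial_s-\mathcal{L}(s))u_1=-b(\xi,s)\cdot\nabla_x u_0,\qquad b(\xi,s)=\int_{\mathbb R^d}a(z)\,\mu(\xi,\xi-z,s)\,z\,dz ,
\]
solved by $u_1=\chi(\xi,s)\cdot\nabla_x u_0$ where the vector corrector $\chi=(\chi_1,\dots,\chi_d)$ satisfies $(\partial_s-\mathcal{L}(s))\chi_k=-b_k$ with periodicity in both $\xi$ and $s$. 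The order $\eps^0$ identity
\[
(\partial_s-\mathcal{L}(s))u_2=-\partial_t u_0+\tfrac12\!\int a(z)\mu\,(z\!\cdot\!\nabla_x)^2u_0\,dz-\int a(z)\mu\,z\!\cdot\!\nabla_x u_1(x,\xi-z,t,s)\,dz
\]
(arguments of $\mu$ suppressed) is solvable in $u_2$ only if its right-hand side averages to zero over the space-time torus; averaging, inserting $u_1=\chi\cdot\nabla_x u_0$ and writing the result in divergence form yields $\partial_t u_0=\mathrm{div}(a^{\rm eff}\nabla u_0)$ with $a^{\rm eff}_{jk}=\big\langle\tfrac12\!\int a\mu\,z_jz_k\,dz-\int a\mu\,z_j\,\chi_k(\xi-z,s)\,dz\big\rangle$, where $\langle\cdot\rangle$ is the average over $\mathbb T^d\times[0,1]$; this is the matrix of Section \ref{s_cellpbm}.

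The heart of the argument, and the main novelty, is the solvability theory for these cell problems, which are parabolic problems on the $(d+1)$-dimensional torus with periodicity imposed in time as well as in space. The starting point is that, by $a(-z)=a(z)$ and $\mu(\xi,\eta,s)=\mu(\eta,\xi,s)$, the jump kernel $a(z)\mu(\xi,\xi-z,s)$ is symmetric, so for every $s$ the bounded operator $\mathcal{L}(s)$ is self-adjoint and non-positive on $L^2(\mathbb T^d)$, with
\[
(\mathcal{L}(s)w,w)=-\tfrac12\int_{\mathbb T^d}\!\int_{\mathbb R^d}a(z)\mu(\xi,\xi-z,s)\big(w(\xi-z)-w(\xi)\big)^2\,dz\,d\xi .
\]
Uniform ellipticity $\mu\ge\mu^->0$ together with the non-degeneracy of $a$ yields a spectral gap for $\mathcal{L}(s)$ on the orthogonal complement of the constants, uniformly in $s$, its kernel consisting of constants only. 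I would then solve $(\partial_s-\mathcal{L}(s))w=g$ with $(\xi,s)$-periodic $w$ by a monodromy (period-map) argument: on the mean-zero subspace the period map is a contraction thanks to the gap, and the Fredholm alternative shows that a periodic solution exists precisely when $\langle g\rangle=0$, the kernel of the formal adjoint $-\partial_s-\mathcal{L}(s)$ being spanned by constants. For the first cell problem this compatibility holds because $\int_{\mathbb T^d}b_k(\xi,s)\,d\xi=\int a(z)z_k M(z,s)\,dz=0$, where $M(z,s)=\int_{\mathbb T^d}\mu(\xi,\xi-z,s)\,d\xi$ is even in $z$ by symmetry while $a(z)z_k$ is odd; the compatibility at order $\eps^0$ is exactly the definition of $a^{\rm eff}$. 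Testing the $\chi$-equation against $\chi$ and using self-adjointness, $a^{\rm eff}$ admits the manifestly non-negative representation
\[
a^{\rm eff}\theta\cdot\theta=\tfrac12\Big\langle\int a(z)\mu(\xi,\xi-z,s)\,\big|\theta\!\cdot\!z+(\theta\!\cdot\!\chi)(\xi,s)-(\theta\!\cdot\!\chi)(\xi-z,s)\big|^2\,dz\Big\rangle ,
\]
so that bounding $\mu\ge\mu^-$ and expanding in Fourier series in $\xi$ give $a^{\rm eff}\theta\cdot\theta\ge\tfrac{\mu^-}2\,\theta^{\!\top}\!\big(\int a(z)\,z\otimes z\,dz\big)\theta$, positive for $\theta\ne0$ by non-degeneracy of the finite second-moment matrix.

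It remains to justify the expansion. I first take smooth, fast-decaying initial data, let $u_0$ solve the homogenized Cauchy problem, form $U^\eps=u_0+\eps u_1+\eps^2u_2$, and study the remainder $r^\eps=u^\eps-U^\eps$, which solves $\partial_t r^\eps=\mathcal{A}^\eps r^\eps+g^\eps$ with initial value $r^\eps(\cdot,0)=-\eps u_1(\cdot,0)-\eps^2u_2(\cdot,0)$ of size $O(\eps)$ in $L^2$. By construction all terms up to order $\eps^0$ cancel in $g^\eps$; the delicate point in estimating $g^\eps$ is that only a finite second moment of $a$ is available, so one cannot expand $u_0(x-\eps z)$ to third order. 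Instead I use the exact integral form of the second-order Taylor remainder and dominated convergence with dominating function $C|z|^2a(z)\in L^1$ to prove $\|g^\eps\|_{L^2(\mathbb R^d\times(0,T))}\to0$; the required regularity of $u_0$ in $t$ and $x$, and of $\chi,u_2$ in the slow variables, is inherited from the data and the solvability theory above, with $\chi,u_2\in L^\infty$ because $\mathcal{L}(s)$ is bounded. Testing $\partial_t r^\eps=\mathcal{A}^\eps r^\eps+g^\eps$ with $r^\eps$ and using the dissipativity $(\mathcal{A}^\eps(t)v,v)\le0$, Gronwall's inequality gives $\|r^\eps\|_{L^\infty(0,T;L^2)}\to0$, whence $u^\eps\to u_0$ in $L^\infty(0,T;L^2(\mathbb R^d))$. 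For arbitrary $u_0\in L^2$ the conclusion then follows by density, using the uniform contraction $\|u^\eps(\cdot,t)\|_{L^2}\le\|u_0\|_{L^2}$ (again from dissipativity) and the analogous bound for the limit semigroup. The principal obstacle throughout is the solvability and regularity theory for the space-time cell problems, which here replaces the classical elliptic cell problem of periodic homogenization.
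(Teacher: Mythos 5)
Your proposal is correct in substance and reproduces the paper's overall architecture: a two-corrector expansion $u^0+\eps\,\chi\cdot\nabla u^0+\eps^2\varkappa\cdot\nabla\nabla u^0$, space--time periodic cell problems on $\mathbb T^{d+1}$ whose compatibility conditions produce $a^{\rm eff}$, a remainder estimate, an $\eps$-uniform energy/Gronwall bound, and a density argument for general $u_0\in L^2(\mathbb R^d)$. (Your sign conventions for the corrector equation and for $a^{\rm eff}$ differ from \eqref{aux3} and \eqref{hat_eff}, but they are internally consistent and match the computation in Section \ref{s_mainres}.) The genuine difference lies in the key lemma, the solvability theory for $(\partial_s-\mathcal{A}(s))\beta=\theta$ on the torus, i.e.\ Proposition \ref{p_auxi}. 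The paper substitutes $\psi=e^{-\int_0^s G\,d\tau}\Psi$, writes the period map $\mathcal S$ as an invertible multiplication operator plus a compact operator (compactness via \cite[Proposition 6]{PiaZhi17}), and invokes the Fredholm alternative, extracting the compatibility condition from the kernel of $\mathcal S^*-I$. You instead split $\beta$ and $\theta$ into spatial mean and fluctuation: symmetry ($\bf C_5$) makes the mean invariant under the flow, the spectral gap uniform in $s$ (from $\mu\geqslant\mu^-$ and simplicity of the zero eigenvalue of the convolution operator) makes the period map a strict contraction on mean-zero functions, so the fluctuation equation is uniquely solvable by a Neumann series, while the mean equation $\partial_s\overline\beta=\overline\theta$ forces $\langle\theta\rangle=0$. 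Your route is more elementary (no compactness argument needed) and yields uniqueness together with estimate \eqref{aux2} in one stroke, but it leans on the symmetric structure; the paper's Fredholm route is heavier yet would survive in non-symmetric situations, where no invariant mean-zero splitting with a gap is available. Your positivity argument is also slightly sharper than Lemma \ref{l_positiv_def}: after observing that the cross term vanishes (the $\xi$-average of $\chi_\zeta(\xi,s)-\chi_\zeta(\xi-z,s)$ is zero for each fixed $z$), you obtain the quantitative bound $a^{\rm eff}\zeta\cdot\zeta\geqslant\frac{\mu^-}{2}\,\zeta\cdot\big(\int_{\mathbb R^d} a(z)\,z\otimes z\,dz\big)\zeta$, and the second-moment matrix is automatically non-degenerate because $a$ is an $L^1$ density, hence cannot be concentrated on a hyperplane; the paper argues positivity only qualitatively.

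One claim you should repair: ``$\chi,u_2\in L^\infty$ because $\mathcal{L}(s)$ is bounded.'' Bounded nonlocal generators of this type have no smoothing effect: the multiplication part $-G(\xi,s)$ persists in the evolution, and convolution with $a\in L^1$ does not map $L^2(\mathbb T^d)$ into $L^\infty(\mathbb T^d)$, so the correctors are in general only in $L^\infty(0,1;L^2(\mathbb T^d))$ --- exactly what \eqref{aux2} provides. This does not break the proof: the remainder terms involving $\chi\big(\frac x\eps-z,\frac t{\eps^2}\big)$ and $\varkappa$ can be estimated with this regularity alone, as in \cite[Proposition 5]{PiaZhi17}, which is what the paper cites; but the $L^\infty$ shortcut as stated is unjustified and should be replaced by that argument.
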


\section{Auxiliary periodic cell problems.}
\label{s_cellpbm}

In what follows 
  we  identify periodic functions in $\mathbb R^d$ with functions defined on the standard $d$-dimensional torus; the functions periodic
  both in spatial variables and in time are identified with those defined on $\mathbb T^{d+1}$.


Consider the following equation:
\begin{equation}\label{aux1}
  \partial_s \beta(\xi,s)-\mathcal{A}(s)\beta(\xi,s)= \theta(\xi,s)
\end{equation}
with  $\theta\in L^2(\mathbb T^{d+1})$ and
\begin{equation}\label{def_cal_A}
\mathcal{A}(s)\beta(\xi,s):=\int_{\mathbb R^d}a(\xi-\eta)\mu(\xi,\eta,s)(\beta(\eta,s)-\beta(\xi,s))d\eta.
\end{equation}
We consider the operators
$(\partial_s-\mathcal{A}(s))$ and its adjoint $(-\partial_s-\mathcal{A}(s))$ in $L^2(T^{d+1})$. Both operators are equipped
with a domain $H^1(\mathbb T^1; L^2(\mathbb T^d))$.
The compatibility condition for equation  \eqref{aux1} is given by the following statement:
\begin{prop}
\label{p_auxi} Let conditions $\bf C_1$-$\bf C_5$ hold.
Then the kernels of the operators $(\partial_s-\mathcal{A}(s))$ and $(-\partial_s-\mathcal{A}(s))$ consist of constants only.
Equation \eqref{aux1} has a solution $\beta\in L^2(\mathbb T^{d+1})$ if and only if
\begin{equation}\label{nes_suf}
\int_0^1\int_{\mathbb T^d}\theta(\xi,s)d\xi ds=0.
\end{equation}
The solution is unique up to an additive constant. If the average of $\beta$ over $\mathbb T^{d+1}$ vanishes, then 
the following estimate holds:
\begin{equation}\label{aux2}
  \|\beta\|_{L^\infty(0,1;L^2(\mathbb T^d))}\leqslant C
  \|\theta\|_{L^2(\mathbb T^{d+1})}
\end{equation}
with a constant $C>0$.
\end{prop}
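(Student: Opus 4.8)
The plan is to exploit the fact that, for each fixed $s$, the operator $\mathcal{A}(s)$ is a bounded, self-adjoint, non-positive operator on $L^2(\mathbb{T}^d)$ that possesses a spectral gap, and then to treat $\partial_s-\mathcal{A}(s)$ as a coercive time-periodic parabolic operator. First I would fold the kernel onto the torus: since $\beta(\cdot,s)$ and $\mu(\xi,\cdot,s)$ are periodic, one may replace $a$ by its periodization $a_{\rm per}(z)=\sum_{k\in\mathbb{Z}^d}a(z+k)\in L^1(\mathbb{T}^d)$, which is even, non-negative and has integral one. Using $a_{\rm per}(\xi-\eta)=a_{\rm per}(\eta-\xi)$ together with $\mu(\xi,\eta,s)=\mu(\eta,\xi,s)$ one obtains the symmetrized identity
\[
(\mathcal{A}(s)u,v)=-\tfrac12\int_{\mathbb{T}^d}\int_{\mathbb{T}^d}a_{\rm per}(\xi-\eta)\mu(\xi,\eta,s)\big(u(\xi)-u(\eta)\big)\big(v(\xi)-v(\eta)\big)\,d\eta\,d\xi,
\]
which shows at once that $\mathcal{A}(s)=\mathcal{A}(s)^*$ and $(\mathcal{A}(s)u,u)\le0$, while $\mathcal{A}(s)\mathbf{1}=0$.

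The key analytic input is a uniform spectral gap. Taking $v=u$ above, bounding $\mu\ge\mu^-$ and passing to Fourier series, the Dirichlet form is estimated from below by $\mu^-\sum_{k}(1-\hat a_{\rm per}(k))|\hat u(k)|^2$, so for every $u$ with zero spatial mean one gets $-(\mathcal{A}(s)u,u)\ge\lambda_0\|u\|^2_{L^2(\mathbb{T}^d)}$ with $\lambda_0=\mu^-\big(1-\sup_{k\ne0}\hat a_{\rm per}(k)\big)>0$, uniformly in $s$. Establishing that $\sup_{k\ne0}\hat a_{\rm per}(k)<1$ is the point where the genuine $d$-dimensional spreading of $a$ enters, and I expect this Poincar\'e-type inequality to be the main (though essentially standard) obstacle; the rest of the argument is driven by it. Since $\mathcal{A}(s)$ preserves the spatial mean, because $(\mathcal{A}(s)u,\mathbf 1)=(u,\mathcal{A}(s)\mathbf 1)=0$, the subspace $H=\{u\in L^2(\mathbb{T}^d):\int_{\mathbb{T}^d}u\,d\xi=0\}$ is invariant, and on $H$ the evolution family $U(s,\tau)$ of $\partial_s=\mathcal{A}(s)$ satisfies $\|U(s,\tau)\|_{\mathcal{L}(H)}\le e^{-\lambda_0(s-\tau)}$, as follows from $\frac{d}{ds}\|U(s,\tau)\phi\|^2=2\big(\mathcal{A}(s)U(s,\tau)\phi,U(s,\tau)\phi\big)\le-2\lambda_0\|U(s,\tau)\phi\|^2$.

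To prove solvability, uniqueness and the kernel statement at once, I would split $\theta=\bar\theta(s)+\theta^\perp(\xi,s)$ into its spatial mean and fluctuation, and seek $\beta=\bar\beta(s)+\beta^\perp(\xi,s)$ of the same form. Integrating \eqref{aux1} over $\xi$ and using $(\mathcal{A}(s)\beta,\mathbf 1)=0$ reduces the mean part to the scalar periodic ODE $\bar\beta'(s)=\bar\theta(s)$, which is solvable if and only if $\int_0^1\bar\theta\,ds=\int_{\mathbb{T}^{d+1}}\theta\,d\xi\,ds=0$, precisely condition \eqref{nes_suf}, and then determines $\bar\beta$ up to an additive constant. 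The fluctuation solves $\partial_s\beta^\perp-\mathcal{A}(s)\beta^\perp=\theta^\perp$ in $H$; by Duhamel's formula periodicity is equivalent to $\big(I-U(1,0)|_H\big)\beta^\perp(0)=\int_0^1U(1,r)\theta^\perp(r)\,dr$, and since $\|U(1,0)|_H\|\le e^{-\lambda_0}<1$ the operator $I-U(1,0)|_H$ is invertible via a Neumann series, giving a unique periodic $\beta^\perp$. Setting $\theta=0$ here forces $\bar\beta\equiv\mathrm{const}$ and $\beta^\perp\equiv0$, so the kernel of $\partial_s-\mathcal{A}(s)$ consists of constants only; the adjoint $-\partial_s-\mathcal{A}(s)$ is treated identically after the time reversal $s\mapsto-s$, which leaves $-\mathcal{A}(s)$ coercive on $H$.

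Finally, for estimate \eqref{aux2} I would combine two bounds. For the fluctuation, the energy identity together with the spectral gap and Young's inequality yields $\frac{d}{ds}\|\beta^\perp\|^2\le-\lambda_0\|\beta^\perp\|^2+\lambda_0^{-1}\|\theta^\perp\|^2$; integrating this differential inequality against the periodic Green's function of $\frac{d}{ds}+\lambda_0$, which is bounded because $\lambda_0>0$, gives $\sup_s\|\beta^\perp(s)\|^2_{L^2(\mathbb{T}^d)}\le C\|\theta^\perp\|^2_{L^2(\mathbb{T}^{d+1})}$. For the mean, solving $\bar\beta'=\bar\theta$ with vanishing time-average and estimating directly gives $\|\bar\beta\|_{L^\infty(0,1)}\le C\|\bar\theta\|_{L^2(0,1)}\le C\|\theta\|_{L^2(\mathbb{T}^{d+1})}$. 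Adding the two contributions yields \eqref{aux2}.
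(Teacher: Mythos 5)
Your proof is correct, but it reaches the solvability statement by a genuinely different route than the paper. The paper proves solvability by reducing equation \eqref{aux1} to a Fredholm alternative for the period (monodromy) map $\mathcal{S}:\nu\mapsto\psi(\cdot,1)$: after an exponential change of unknown it represents $\mathcal{S}-I$ as the sum of the invertible multiplication operator $\nu\mapsto\big[\exp\big(-\int_0^1G(\xi,s)ds\big)-1\big]\nu$ and a compact operator, and then invokes the Fredholm theorem together with the fact that $\ker(\mathcal{S}^*-I)$ consists of constants; the spectral gap (obtained from the simplicity of the zero eigenvalue of $u\mapsto u-a*u$, cited from \cite{PZ19}, plus discreteness of the spectrum and the minimax principle) enters only in the proof of the a priori bound \eqref{aux2}. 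You instead make the spectral gap the engine of the whole argument: you prove it directly by periodizing the kernel and passing to Fourier series, and then the mean/fluctuation splitting (which the paper also uses, but only for the estimate) turns \eqref{aux1} into a scalar periodic ODE, which produces condition \eqref{nes_suf}, plus a fluctuation equation on the mean-zero subspace $H$, where the evolution family contracts, $\|U(1,0)|_H\|\leqslant e^{-\lambda_0}<1$, so $I-U(1,0)|_H$ is inverted by a Neumann series with no Fredholm theory or compactness argument at all. Your route is more self-contained and quantitative (explicit gap $\lambda_0$, explicit contraction rate, and \eqref{aux2} follows from the same differential inequality), while the paper's route delegates the spectral information to \cite{PZ19} and leans on soft Fredholm machinery. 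The one step you flag but do not carry out, namely $\sup_{k\neq0}\hat a_{\rm per}(k)<1$, is indeed standard and true: for each $k\neq0$ one has $\hat a_{\rm per}(k)=\hat a(k)<1$ because $a\geqslant0$, $\int_{\mathbb R^d}a=1$, and equality would force the integrable function $a$ to be concentrated on the Lebesgue-null set $\{z:k\cdot z\in\mathbb Z\}$; by the Riemann--Lebesgue lemma $\hat a(k)\to0$, so the supremum is a maximum over finitely many terms each strictly below $1$. Filling this in puts your argument on the same level of rigor as the paper, which at the analogous point simply cites \cite{PZ19} for the simplicity of the zero eigenvalue.
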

\begin{proof}
%
 First we show that the kernel of the operator $(\partial_s-\mathcal{A}(s))$ defined on $L^2(\mathbb T^{d+1})$
 consists of constants only. Consider a periodic solution $\rho=\rho(\xi,s)$ of the equation $\partial_s\rho-\mathcal{A}(s)\rho=0$.
 Multiplying this equation by $\rho$ and integrating the resulting relation over $\mathbb T^{d+1}$ we arrive at the relation
 $$
 \frac12
 \int_0^1ds\int_{\mathbb T^d}\int_{\mathbb R^d}\mu(\xi,\eta,s)a(\xi-\eta)\big(\rho(\eta,s)-\rho(\xi,s)\big)^2d\xi d\eta=0.
 $$
Since $a(\cdot)\geqslant 0$ and $\mu(\cdot)\geqslant \mu_->0$,  this relation holds if and only if
 $$
\frac12 \int_0^1ds\int_{\mathbb T^d}\int_{\mathbb R^d}a(\xi-\eta)\big(\rho(\eta,s)-\rho(\xi,s)\big)^2d\xi d\eta=0.
 $$
 Therefore, for almost all $s\in (0,1)$ we  have
 $$
\frac12 \int_{\mathbb T^d}\int_{\mathbb R^d}a(\xi-\eta)\big(\rho(\eta,s)-\rho(\xi,s)\big)^2d\xi d\eta=0.
 $$
The expression on the left-hand side here is the quadratic form of the operator
$$
\rho(\xi,s)-\int_{\mathbb R^d}a(\xi-\eta)\rho(\eta,s)d\eta=:\mathcal{B}\rho(\xi,s),
$$
where $s$ is a parameter. Since this operator is self-adjoint and the operator $\rho\mapsto \int_{\mathbb R^d}a(\xi-\eta)\rho(\eta)d\eta$
is compact in $L^2(\mathbb T^d)$, then zero is an eigenvalue of $\mathcal{B}$, and the dimension of the kernel
of quadratic form
$$
(\mathcal{B}\rho,\rho)_{L^2(\mathbb T^d)}=\frac12\int_{\mathbb T^d}\int_{\mathbb R^d}a(\xi-\eta)\big(\rho(\eta,s)-\rho(\xi,s)\big)^2d\xi d\eta
$$
coincides with the multiplicity of this eigenvalue.   As was shown in \cite{PZ19},  zero is a simple eigenvalue
of $\mathcal{B}$. Therefore, the kernel of $(\partial_s-\mathcal{A}(s))$ consists of constants only.
 By the same reason the kernel of  the adjoint operator $(-\partial_s-\mathcal{A}(s))$ contains
only constants.

The solvability condition of equation \eqref{aux1} is a non-trivial issue. We show that this problem can be reduced to
solvability problem for some Fredholm operator.
In order to prove the second statement of Proposition \ref{p_auxi} and  justify compatibility condition \eqref{nes_suf} we
consider an auxiliary Cauchy problem
\begin{equation}\label{cauchy_super}
\partial_s\gamma(\xi,s)=\mathcal{A}(s)\gamma(\xi,s)+\theta(\xi,s), \quad\gamma(\xi,0)=\nu(\xi), \quad
(\xi,s)\in\mathbb T^d\times(0,1).
\end{equation}
Clearly, equation  \eqref{aux1}  is solvable if and only in for some $\nu\in L^2(\mathbb T^d)$ we have $\gamma(\cdot,1)=\nu$.
In order to solve the equation $\gamma(\cdot,1)=\nu$ we introduce two more Cauchy problems.
The first one reads 
\begin{equation}\label{cau_pbm_psi}
\partial_s\psi-\mathcal{A}(s)\psi=0,\quad \psi(\xi,0)=\nu(\xi), \qquad \nu\in L^2(\mathbb T^d).
\end{equation}
We denote by $\mathcal{S}$ the operator in $L^2(\mathbb T^d)$ that maps the initial condition $\nu(\cdot)$ to $\psi(\cdot,1)$.
The second Cauchy problem reads
\begin{equation}\label{cauchy_fred}
\partial_s\phi-\mathcal{A}(s)\phi=\theta,\quad \phi(\xi,0)=0, \qquad \theta\in L^2(\mathbb T^d\times(0,1)),
\end{equation}
 its solution evaluated at $s=1$ is denoted by $\hat\phi$,  $\hat\phi(\xi)=\phi(\xi,1)$.
Then the relation $\gamma(\cdot,1)=\nu$ is equivalent to the equation $\mathcal{S}\nu+\hat\phi=\nu$
that can be rewritten as
$$
(\mathcal{S}-I)\nu+\hat\phi=0.
$$

Evidently, under our standing assumptions problems \eqref{cauchy_super}--\eqref{cauchy_fred} have a unique solution.
Moreover, the operator $\mathcal{S}$ is bounded in $L^2(\mathbb T^d)$, and the kernel of the operator
$\mathcal{S}-I$ consists of constants only.  To define the adjoint operator $\mathcal{S}^*$ we consider
the Cauchy problem
$$
-\partial_s\vartheta-\mathcal{A}(s)\vartheta=0,\quad \vartheta(\xi,1)=\nu, \qquad (\xi,s)\in \mathbb T^d\times(0,1).
$$
Then $\mathcal{S}^*\nu(\xi)= \vartheta(\xi,0)$. Exploiting the same arguments as above we conclude
that the kernel of $\mathcal{S}^*-I$ also consists of constants only.

We are going to show that the operator $\mathcal{S}-I$ can be represented as the sum of a compact and an invertible operators
in $L^2(\mathbb T^d)$. To this end we introduce the notation
$$
\mathcal{A}_0(s)u(\xi)=\int_{\mathbb R^d}a(\xi-\eta)\mu(\xi,\eta,s)u(\eta)\,d\eta,\quad
G(\xi,s)=\int_{\mathbb R^d}a(\xi-\eta)\mu(\xi,\eta,s)\,d\eta.
$$
Obserbve  that $G$ is a periodic in  $\xi$ and  $s$ function that satisfies the estimates
\begin{equation}\label{bou_g}
0<\mu^-\leq G(\xi,s)\leq \mu^+.
\end{equation}
In problem \eqref{cau_pbm_psi} we make the following change  of unknown function:
\begin{equation}\label{chan_vary}
\psi(\xi,s)=\exp\Big(-\int_0^sG(\xi,\tau)\,d\tau\Big)\Psi(\xi,s).
\end{equation}

Then the function $\Psi(\xi,s)$ is a solution to the Cauchy problem
\begin{equation}\label{eqG}
\partial_s\Psi-\mathcal{A}_G(s)\Psi=0, \qquad \Psi(\xi,0)=\nu(\xi),
\end{equation}
with
\begin{equation}\label{aux_cau_ppsi}
\mathcal{A}_G(s)\Psi(\xi,s)=\int_{\mathbb R^d}a(\xi-\eta)\mu(\xi,\eta,s)\exp\Big(\int_0^s\big(G(\xi,\tau)-G(\eta,\tau)\big)\,d\tau\Big)
\Psi(\eta,s)\,d\eta.
\end{equation}
The operator that maps the initial condition $\nu(\xi)$ to the solution $\Psi(\xi,s)$, $0\leqslant s\leqslant 1$, is denoted by $\mathcal M$.
Since the family of operators $\{\mathcal{A}_G(s)\}$ is uniformly in $s$
bounded in $L^2(\mathbb T^d)$,  problem \eqref{eqG}
has a unique solution for each $\nu\in L^2(\mathbb T^d)$,
and the following inequality holds:
$$
\|\Psi\|_{L^\infty(0,1;L^2(T^d))}\leq C\|\nu\|_{L^2(\mathbb T^d)}.
$$
Therefore, $\mathcal{M}$ 
is a bounded linear operator from $L^2(\mathbb T^d)$  to
$L^\infty(0,1;L^2(\mathbb T^d))$.
From \eqref{eqG} and  \eqref{aux_cau_ppsi} we obtain
\begin{equation}\label{chewing}
\Psi(\xi,1)=\nu(\xi)+\int_{0}^{1}\int_{\mathbb R^d}a(\xi-\eta)\mu(\xi,\eta,s)\exp\Big(\int_0^s\big(G(\xi,\tau)-G(\eta,\tau)\big)\,d\tau\Big)
\Psi(\eta,s)\,d\eta ds
\end{equation}
The integral operator on the right-hand side here defines a compact linear operator from $L^2(\mathbb T^d\times[0,1])$ to
$L^2(\mathbb T^d)$.
Indeed, the kernel of the integral operator in \eqref{chewing} admits an estimate
$$
a(\xi-\eta)\mu(\xi,\eta,s)\exp\Big(\int_0^s\big(G(\xi,\tau)-G(\eta,\tau)\big)\,d\tau\Big)\leqslant \mu^+e^{\mu^+}a(\xi-\eta).
$$
According to \cite[Proposition 6]{PiaZhi17}, for  each $s\in(0,1)$  the norm of the operator
$$
\Phi(\xi)\mapsto \int_{\mathbb R^d}a(\xi-\eta)\mu(\xi,\eta,s)\exp\Big(\int_0^s\big(G(\xi,\tau)-G(\eta,\tau)\big)\,d\tau\Big)
\Phi(\eta)\,d\eta
$$
in $L^2(\mathbb T^d)$ does not exceed $\|a\|_{L^1(\mathbb R^d)}\mu^+e^{\mu^+}=\mu^+e^{\mu^+}$. Consequently,
the norm of the operator
$$
\Psi\mapsto
\int_{0}^{1}\int_{\mathbb R^d}a(\xi-\eta)\mu(\xi,\eta,s)\exp\Big(\int_0^s\big(G(\xi,\tau)-G(\eta,\tau)\big)\,d\tau\Big)
\Psi(\eta,s)\,d\eta ds
$$
acting from $L^2(T^{d+1})$ to $L^2(\mathbb T^d)$ admits the same upper bound.  Approximating $a(z)$ in $L^1(\mathbb R^d)$
by functions from $L^2(\mathbb R^d)$ and making use of the same arguments as in the proof of  \cite[Proposition 6]{PiaZhi17}
we obtain the desired compactness.

Letting
$$
\mathcal{K}\nu(\xi)=\int_{0}^{1}\int_{\mathbb R^d}a(\xi-\eta)\mu(\xi,\eta,s)\exp\Big(\int_0^s\big(G(\xi,\tau)-G(\eta,\tau)\big)\,d\tau\Big)
\mathcal{M}\nu(\eta,s)\,d\eta ds
$$
one can rewrite equation \eqref{chewing} as
$$
\Psi(\cdot,1)=\nu+\mathcal{K}\nu,
$$
where $\mathcal{K}$ is a compact operator in $L^2(\mathbb T^d)$ because it is a composition of  a bounded operator from
$L^2(\mathbb T^d)$ to $L^2(\mathbb T^{d+1})$ and a compact operator from $L^2(\mathbb T^{d+1})$ to $L^2(\mathbb T^d)$.
In view of \eqref{chan_vary} this yields
$$
\mathcal{S}\nu=\psi(\cdot,1)=\exp\Big(-\int_0^1G(\xi,s)\,ds\Big)\nu+\mathcal{K}_1\nu,
$$
with a compact operator $\mathcal{K}_1$. The equation $\mathcal{S}\nu-\nu=
-\hat\phi$ can now be rewritten as
\begin{equation}\label{fredh}
\Big[\exp\Big(-\int_0^1G(\xi,s)\,ds\Big)-1\Big]\nu+\mathcal{K}_1\nu=-\hat\phi.
\end{equation}
Due to \eqref{bou_g} the multiplication operator $\nu\,\to\, \Big[\exp\Big(-\int_0^1G(\xi,s)\,ds\Big)-1\Big]\nu$
is invertible.  Since the kernel of the adjoint operator $(\mathcal{S}^*-I)$ consists of constants only,
by the Fredholm theorem (see e.g. \cite{RS}) equation \eqref{fredh} is solvable if and only if
$$
\int_{\mathbb T^d}\hat\phi(\xi)d\xi=0.
$$
Integrating the equation in \eqref{cauchy_fred}  over $\mathbb T^d\times(0,1)$
and using the relation
$$
\big( \mathcal{A}(s) \phi, 1 \big)_{L_2({\mathbb T^d})} =0 \quad \mbox{ for any } \; s,
$$
we obtain
$$
\int_{\mathbb T^{d+1}}\theta(\xi,s)d\xi ds=\int_{\mathbb T^d}\hat\phi(\xi)d\xi.
$$
Therefore,  condition \eqref{nes_suf} is necessary and sufficient for solvability of equation \eqref{aux1}.

\medskip

In order to justify \eqref{aux2} we denote
$$
\overline{\beta}(s)=\int_{\mathbb T^d}\beta(\xi,s)\,d\xi, \quad \overline{\theta}(s)=\int_{\mathbb T^d}\theta(\xi,s)\,d\xi,
\quad\mathop{\beta}\limits^{\scriptscriptstyle \circ}(\xi,s)=\beta(\xi,s)-\overline{\beta}(s),\quad
\mathop{\theta}\limits^{\scriptscriptstyle \circ}(\xi,s)=\theta(\xi,s)-\overline{\theta}(s)
$$
and observe that both $\mathop{\theta}\limits^{\scriptscriptstyle \circ}$ and $\overline{\theta}$ have zero average over
$\mathbb T^{d+1}$ if the average of $\theta$ vanishes. Therefore, $\overline{\beta}(s)$ and
$\mathop{\beta}\limits^{\scriptscriptstyle \circ}(\xi,s)$ are solutions of the equations
\begin{equation}\label{equ_parts}
  \partial_s \beta-\mathcal{A}(s)\beta=\overline{\theta}(s)\qquad\hbox{and}\qquad
  \partial_s \beta-\mathcal{A}(s)\beta=\mathop{\theta}\limits^{\scriptscriptstyle \circ}(\xi,s),
\end{equation}
respectively.
Since $\overline{\theta}$ does not depend on $\xi$ due to the definition of $\mathcal{A}(s)$, see \eqref{def_cal_A}, the first equation is reduced to $\partial_s\overline{\beta}(s)
=\overline{\theta}(s)$ and we trivially have $\|\overline{\beta}\|_{L^\infty(\mathbb T^{d+1})}\leqslant
\|\overline{\theta}\|_{L^2(\mathbb T^{d+1})}$ if the mean value of $\overline{\beta}$ vanishes.
Multiplying the second  equation in \eqref{equ_parts} by $\mathop{\beta}\limits^{\scriptscriptstyle \circ}$
and integrating the resulting relation over $\mathbb T^{d+1}$  yields
\begin{equation}\label{start_ineq}
\frac12\int_{\mathbb T^{d+1}}\int_{\mathbb R^d}a(\xi-\eta)\mu(\xi,\eta,s)\big(
\mathop{\beta}\limits^{\scriptscriptstyle \circ}(\eta,s)-\mathop{\beta}\limits^{\scriptscriptstyle \circ}(\xi,s))\big)^2\,d\eta d\xi ds
=\int_{\mathbb T^{d+1}} \mathop{\beta}\limits^{\scriptscriptstyle \circ}(\xi,s)
\mathop{\theta}\limits^{\scriptscriptstyle \circ}(\xi,s)d\xi ds.
\end{equation}
As was shown above,  zero is a simple eigenvalue of the operator
$$\mathcal{A}_-v(\xi):=
\mu_-\int_{\mathbb R^d}a(\xi-\eta)\big(v(\xi)-v(\eta)\big) d \eta
$$ in $L^2(\mathbb T^d)$.
By the Fredholm theorem the spectrum
of this operator is discrete. Therefore, by the minimax principle, we have
$(\mathcal{A}_-v,v)_{L^2(\mathbb T^d)}\geqslant C_2\|v\|^2_{L^2(\mathbb T^d)} $, $C_2>0$,  for any $v$ whose average over $\mathbb T^d$
is equal to zero. Here $C_2$ is the distance from  zero to the rest of the spectrum of $\mathcal{A}_-$. Thus we have
$$
\begin{array}{c}
\displaystyle
\int_{\mathbb T^{d+1}}\int_{\mathbb R^d}a(\xi-\eta)\mu(\xi,\eta,s)\big(
\mathop{\beta}\limits^{\scriptscriptstyle \circ}(\eta,s)-\mathop{\beta}\limits^{\scriptscriptstyle \circ}(\xi,s))\big)^2\,d\eta d\xi ds
\\[5.5mm]
\displaystyle
\geqslant
\int_{\mathbb T^{d+1}}\int_{\mathbb R^d}\mu^-a(\xi-\eta)
\big(\mathop{\beta}\limits^{\scriptscriptstyle \circ}(\eta,s)-\mathop{\beta}\limits^{\scriptscriptstyle \circ}(\xi,s))\big)^2\,d\eta d\xi ds
\geqslant C_2 \|\mathop{\beta}\limits^{\scriptscriptstyle \circ}\|^2_{L^2(\mathbb T^{d+1})}, \quad C_2>0.
\end{array}
$$
Combining  \eqref{start_ineq} with the last estimate  and using the Cauchy-Schwarz inequality we deduce the estimate $ \|\mathop{\beta}\limits^{\scriptscriptstyle \circ}\|_{L^2(\mathbb T^{d+1})}\leqslant
C_2^{-\frac12}\|\mathop{\theta}\limits^{\scriptscriptstyle \circ}\|_{L^2(\mathbb T^{d+1})}$.
Since $\| \beta \|^2_{L^2(\mathbb T^{d+1})} = \| \mathop{\beta}\limits^{\scriptscriptstyle \circ} \|^2_{L^2(\mathbb T^{d+1})} + \| \overline{\beta} \|^2_{L^2(\mathbb T^{d+1})}$, we conclude that
 the estimate $\|\beta\|_{L^2(\mathbb T^{d+1})}\leq C_3\|\theta\|_{L^2(\mathbb T^{d+1})}$ holds.
Finally upper bound \eqref{aux2} follows from the standard parabolic estimates. Indeed, multiplying  equation \eqref{aux1}
by $\beta$ and integrating the resulting equation over the set $\mathbb T^d\times(s_1,s_2)$, $0\leqslant s_1<s_2\leqslant 1$,
we obtain
$$
\frac 12\|\beta(\cdot,s_1)\|^2_{L^2(\mathbb T^d)}-\frac 12\|\beta(\cdot,s_2)\|^2_{L^2(\mathbb T^d)}\leqslant
\int_{s_1}^{s_2}\int_{\mathbb T^d}\beta(\xi,s)\theta(\xi,s)d\xi ds\leqslant \|\beta\|_{L^2(\mathbb T^{d+1})}
\|\theta\|_{L^2(\mathbb T^{d+1})}
$$
$$
\leqslant C_3\|\theta\|^2_{L^2(\mathbb T^{d+1})}.
$$
This yields  \eqref{aux2}.
\end{proof}

\bigskip
We now introduce a periodic in  $\xi$ and $s$ vector-function
$$
\chi=\big(\chi_1(\xi,s),\,\chi_2(\xi,s),\ldots,\chi_d(\xi,s)\big), \qquad
 \chi\in L^\infty(0,1;(L^2(\mathbb T^d))^d),
 $$
 whose components are defined as periodic solutions of the following equations:
 \begin{equation}\label{aux3}
  \partial_s \chi_j(\xi,s)-\int_{\mathbb R^d}a(\xi-\eta)\mu(\xi,\eta,s)(\chi_j(\eta,s)-\chi_j(\xi,s))d\eta= -
 \int_{\mathbb R^d}a(\xi-\eta)\mu(\xi,\eta,s)(\eta_j-\xi_j)d\eta,
\end{equation}
$j=1,\ldots, d$.
 For brevity we denote the vector-function on the right-hand side of this equation by $q(\xi,s)=(q_1(\xi,s),\ldots,q_d(\xi,s))$.
One can easily check that under our standing assumptions this function is well-defined, periodic in  $\xi$ and $s$, and
belongs to the space $L^\infty(\mathbb T^{d+1})$.  Moreover, taking in account the symmetry conditions, we conclude that
 $$
 \int_{\mathbb T^d} q(\xi,s)d\xi=0
 $$
 for all $s$. Therefore, due to Proposition  \ref{p_auxi}, equation   \eqref{aux3} has a periodic solution. In order to fix the choice of
 an additive constant we impose a normalization consition
 $$
 \int_{\mathbb T^{d+1}} \chi(\xi,s)d\xi ds=0.
 $$
 This vector-function $\chi$ is called a corrector.

 We also define an effective matrix 
$a^{\mathrm{eff}}=\{a^{\mathrm{eff}}_{ij}\}_{i,j=1}^d$
by the formula
\begin{equation}\label{a_efff}
a^{\mathrm{eff}}_{ij}=\frac12\big(\widehat{a}_{ij}+\widehat{a}_{ji}\big)
\end{equation}  
with
 \begin{equation}\label{hat_eff}
 \widehat{a}_{ij}=\int_0^1\int_{\mathbb T^d}\int_{\mathbb R^{d}}a(\xi-\eta)\mu(\xi,\eta,s)\big(\frac12(\eta_i-\xi_i)(\eta_j-\xi_j)
 -(\eta_i-\xi_i)\chi_j(\eta,s)\big)d\xi d\eta ds.
 \end{equation}
  \begin{lemma}\label{l_positiv_def}
The matrix $a^{\mathrm{eff}}$ is positive definite.
 \end{lemma}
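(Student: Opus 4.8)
The plan is to show that for any nonzero vector $\lambda=(\lambda_1,\ldots,\lambda_d)\in\mathbb R^d$ the quadratic form $\sum_{i,j}a^{\mathrm{eff}}_{ij}\lambda_i\lambda_j$ is strictly positive. Since $a^{\mathrm{eff}}$ is the symmetrization of $\widehat a$, this form coincides with $\sum_{i,j}\widehat a_{ij}\lambda_i\lambda_j$, so I would work directly with $\widehat a$. The first step is to contract formula \eqref{hat_eff} against $\lambda_i\lambda_j$ and introduce the scalar corrector $\chi_\lambda(\xi,s)=\sum_j\lambda_j\chi_j(\xi,s)$, together with the linear function $\ell_\lambda(\xi)=\sum_j\lambda_j\xi_j$. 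With this notation
\begin{equation}\label{plan_form}
\sum_{i,j}\widehat a_{ij}\lambda_i\lambda_j=\int_0^1\!\!\int_{\mathbb T^d}\!\!\int_{\mathbb R^{d}}a(\xi-\eta)\mu(\xi,\eta,s)\Big(\tfrac12\big(\ell_\lambda(\eta)-\ell_\lambda(\xi)\big)^2-\big(\ell_\lambda(\eta)-\ell_\lambda(\xi)\big)\chi_\lambda(\eta,s)\Big)\,d\xi\,d\eta\,ds.
\end{equation}

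The key idea is a symmetrization in the integration variables $\xi\leftrightarrow\eta$. Using the symmetry $a(-z)=a(z)$ and $\mu(\xi,\eta,s)=\mu(\eta,\xi,s)$ from $\bf C_5$, the factor $\big(\ell_\lambda(\eta)-\ell_\lambda(\xi)\big)\chi_\lambda(\eta,s)$ can be averaged with its image under the swap, which replaces $\chi_\lambda(\eta,s)$ by $-\tfrac12\big(\chi_\lambda(\eta,s)-\chi_\lambda(\xi,s)\big)$ plus a term that integrates to zero. This turns \eqref{plan_form} into a complete square: I expect to obtain
$$
\sum_{i,j}\widehat a_{ij}\lambda_i\lambda_j=\frac12\int_0^1\!\!\int_{\mathbb T^d}\!\!\int_{\mathbb R^d}a(\xi-\eta)\mu(\xi,\eta,s)\Big(\big(\ell_\lambda(\eta)+\chi_\lambda(\eta,s)\big)-\big(\ell_\lambda(\xi)+\chi_\lambda(\xi,s)\big)\Big)^2\,d\xi\,d\eta\,ds,
$$
where the $\partial_s$ term from the corrector equation \eqref{aux3} contributes nothing after integration over the time period because $\int_0^1\partial_s(\chi_\lambda^2)\,ds=0$ by periodicity in $s$. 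This manifestly nonnegative expression establishes that $a^{\mathrm{eff}}$ is nonnegative definite.

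The main obstacle, and the heart of the lemma, is strict positivity. The form above vanishes only if the function $\Xi_\lambda(\xi,s):=\ell_\lambda(\xi)+\chi_\lambda(\xi,s)$ satisfies $\Xi_\lambda(\eta,s)=\Xi_\lambda(\xi,s)$ for $a(\xi-\eta)$-almost every pair and almost every $s$; by the connectedness/irreducibility argument underlying the simplicity of the zero eigenvalue of $\mathcal B$ (established in \cite{PZ19} and invoked in Proposition \ref{p_auxi}), this forces $\Xi_\lambda(\cdot,s)$ to be constant in $\xi$ for almost every $s$. I would then argue that $\chi_\lambda$ is genuinely periodic in $\xi$ on $\mathbb T^d$, whereas $\ell_\lambda(\xi)=\lambda\cdot\xi$ grows linearly and is not periodic unless $\lambda=0$; hence $\Xi_\lambda(\cdot,s)$ cannot be constant in $\xi$ for any $\lambda\neq0$. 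This contradiction shows the form is strictly positive, completing the proof. The delicate point to handle carefully is the passage from the vanishing of the quadratic form to constancy of $\Xi_\lambda$, which relies on the same spectral-gap/irreducibility property that guaranteed the kernel of $\mathcal A(s)$ consists of constants, so I would cite Proposition \ref{p_auxi} and the cited simplicity result rather than reprove it.
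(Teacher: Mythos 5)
Your proposal is correct in substance and follows essentially the same route as the paper's proof: there, too, one contracts $\widehat a$ with a fixed vector, multiplies the contracted corrector equation \eqref{aux3_zeta} by $\chi_\lambda$, integrates over $\mathbb T^{d+1}$ (the $\partial_s$-term disappearing by periodicity in $s$), and combines the resulting identity with \eqref{hat_eff} to obtain the complete-square representation
\begin{equation*}
2\,a^{\mathrm{eff}}\lambda\cdot\lambda=\int_0^1\!\!\int_{\mathbb T^d}\!\!\int_{\mathbb R^d}a(\xi-\eta)\,\mu(\xi,\eta,s)\,
\Bigl(\bigl(\eta-\xi\bigr)\cdot\lambda-\bigl(\chi_\lambda(\eta,s)-\chi_\lambda(\xi,s)\bigr)\Bigr)^2\,d\xi\,d\eta\,ds .
\end{equation*}
Two points need fixing, neither fatal. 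First, a sign slip: with the conventions of \eqref{aux3} and \eqref{hat_eff} the corrected coordinate is $\ell_\lambda-\chi_\lambda$, not $\ell_\lambda+\chi_\lambda$; indeed $(\partial_s-\mathcal A(s))(\ell_\lambda-\chi_\lambda)=0$, whereas $\ell_\lambda+\chi_\lambda$ is not annihilated, and accordingly the $\xi\leftrightarrow\eta$ symmetrization replaces $\chi_\lambda(\eta,s)$ by $+\tfrac12\bigl(\chi_\lambda(\eta,s)-\chi_\lambda(\xi,s)\bigr)$ rather than $-\tfrac12(\cdots)$. Your concluding argument is insensitive to this sign. Second, and more substantively: in the strict-positivity step --- which, incidentally, the paper leaves entirely implicit, its proof ending at the identity above, so here you are supplying something the paper omits --- you cannot simply cite Proposition \ref{p_auxi} or the simplicity of the zero eigenvalue of $\mathcal B$, since those are statements about functions on the torus, while $\Xi_\lambda$ is linear plus periodic and hence lies outside their scope. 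Either redo the chaining argument on $\mathbb R^d$ (using that $\{a>0\}$ has positive Lebesgue measure, so its iterated sumsets cover $\mathbb R^d$), or argue more directly: vanishing of the integral gives $\chi_\lambda(\xi+z,s)-\chi_\lambda(\xi,s)=z\cdot\lambda$ for a.e.\ $z$ with $a(z)>0$ and a.e.\ $(\xi,s)$; integrating in $\xi$ over $\mathbb T^d$ kills the left-hand side by periodicity, so $z\cdot\lambda=0$ for a.e.\ $z\in\{a>0\}$, which is impossible for $\lambda\neq0$ because $\{a>0\}$ has positive measure while any hyperplane is a Lebesgue-null set.
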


\begin{proof}
For an arbitrary vector $\zeta\in\mathbb R^d$  denote $\chi_\zeta(\eta,s)=\chi(\eta,s)\cdot\zeta$,
where the symbol $'\cdot'$ stands for the scalar product in $\mathbb R^d$.
Then by \eqref{hat_eff} we have
$$
 \widehat{a}\zeta\cdot\zeta=\int_0^1\int_{\mathbb T^d}\int_{\mathbb R^{d}}a(\xi-\eta)\mu(\xi,\eta,s)\big(\frac12(\eta_i-\xi_i)(\eta_j-\xi_j)
 -(\eta_i-\xi_i)\chi_j(\eta,s)\big)\zeta_i\zeta_j\,d\xi d\eta ds=
$$
$$
=\int_0^1\int_{\mathbb T^d}\int_{\mathbb R^{d}}a(\xi-\eta)\mu(\xi,\eta,s)\big(\frac12(\eta\cdot\zeta-\xi\cdot\zeta)^2
 -(\eta\cdot\zeta-\xi\cdot\zeta)\chi_\zeta(\eta,s)\big)\,d\xi d\eta ds;
$$
here and in what follows we assume a summation over repeating indices.
According to \eqref{aux3} the function $\chi_\zeta$ is a solution to the equation
 \begin{equation}\label{aux3_zeta}
  \partial_s \chi_\zeta(\xi,s)-\int_{\mathbb R^d}a(\xi-\eta)\mu(\xi,\eta,s)(\chi_\zeta(\eta,s)-\chi_\zeta(\xi,s))d\eta= -
 \int_{\mathbb R^d}a(\xi-\eta)\mu(\xi,\eta,s)(\eta\cdot\zeta-\xi\cdot\zeta)d\eta.
\end{equation}
Multiplying this equation by $\chi_\zeta(\xi,s)$
and integrating the resulting relation in variables $\xi$ and $s$ over  $\mathbb T^{d+1}$ yields
$$
\int_0^1\int_{\mathbb T^d}\int_{\mathbb R^{d}}a(\xi-\eta)\mu(\xi,\eta,s)\big(\frac12(\chi_\zeta(\eta,s)-\chi_\zeta(\xi,s))^2
 +(\eta\cdot\zeta-\xi\cdot\zeta)\chi_\zeta(\xi,s)\big)\,d\xi d\eta ds=0.
$$
Considering this relation we obtain
$$
\int_0^1\int_{\mathbb T^d}\int_{\mathbb R^{d}}a(\xi-\eta)\mu(\xi,\eta,s)\big((\eta\cdot\zeta-\xi\cdot\zeta)-(\chi_\zeta(\eta,s)-\chi_\zeta(\xi,s))\big)^2\,d\xi d\eta ds=
$$
$$
=\int_0^1\int_{\mathbb T^d}\int_{\mathbb R^{d}}a(\xi-\eta)\mu(\xi,\eta,s)\big((\eta\cdot\zeta-\xi\cdot\zeta)^2-2(\eta\cdot\zeta-\xi\cdot\zeta)\chi_\zeta(\eta,s))\big)\,d\xi d\eta ds+
$$
$$
+\int_0^1\int_{\mathbb T^d}\int_{\mathbb R^{d}}a(\xi-\eta)\mu(\xi,\eta,s)\big((\chi_\zeta(\eta,s)-\chi_\zeta(\xi,s))^2+2(\eta\cdot\zeta-\xi\cdot\zeta)\chi_\zeta(\xi,s))\big)\,d\xi  d\eta ds
$$
$$
=2 a^{\rm eff}\zeta\cdot\zeta.
$$
This implies the required positive definiteness of the matrix $a^{\rm eff}$.
\end{proof}

\section{Main result}\label{s_mainres}

We pass to the main result of this work.
 \begin{thm}\label{t_main1}
 Let conditions $\bf C_1$--$\bf C_5$ be fulfilled. Then for any initial condition $u_0\in L^2(\mathbb R^d)$ a solution
 $u^\eps$ of problem  \eqref{ori_eqn} converges as $\eps\to0$ in the space  $L^\infty(0,T;L^2(\mathbb R^d))$
 to a solution $u^0$ of the following homogenized problem;
 \begin{equation}\label{hom_eqn}
\begin{array}{c}
\displaystyle
\partial_t u^0(x,t)=\mathrm{div}\big(a^{\mathrm{eff}}\nabla u^0(x,t)\big)\qquad \hbox{in }\mathbb R^d\times[0,T],\\[2mm]
u^0(x,0)=u_0(x);
\end{array}
\end{equation}here $a^{\mathrm{eff}}$ is the effective matrix defined in \eqref{a_efff} and \eqref{hat_eff}.
 \end{thm}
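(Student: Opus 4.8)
The plan is to combine a two-scale asymptotic expansion in the fast variables $\xi=x/\eps$, $s=t/\eps^2$ with an energy (stability) estimate, and to reduce beforehand to smooth, rapidly decaying data. Condition $\mathbf{C_5}$ makes $\mathcal{A}^\eps(t)$ symmetric and non-positive, since
$$
\big(\mathcal{A}^\eps(t)v,v\big)=-\frac1{2\eps^{d+2}}\int_{\mathbb R^d}\int_{\mathbb R^d}a^\eps(x-y)\mu^\eps(x,y,t)\big(v(y)-v(x)\big)^2\,dy\,dx\leqslant0 .
$$
Hence $\|u^\eps(\cdot,t)\|_{L^2}\leqslant\|u_0\|_{L^2}$ uniformly in $\eps$, and the constant-coefficient limit semigroup is an $L^2$-contraction as well. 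Consequently, once Theorem \ref{t_main1} is proved for $u_0$ in a dense subclass of $L^2(\mathbb R^d)$, it extends to all $u_0\in L^2(\mathbb R^d)$ by a standard $\eps$--$\delta$ argument.

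For smooth decaying $u_0$ I would let $u^0$ solve \eqref{hom_eqn}, so that parabolic regularity of the constant-coefficient operator supplies all the bounded, $L^2$-decaying $x$- and $t$-derivatives of $u^0$ needed below, and set $v^\eps=u^0+\eps\,u_1+\eps^2 u_2$ with $u_1=-\chi(\xi,s)\cdot\nabla_x u^0$ (the sign dictated by the convention in \eqref{aux3}) and $u_2$ a second corrector. Inserting $v^\eps$ and using $v(x-\eps z)=v(x)-\eps z\cdot\nabla v+\tfrac{\eps^2}2(z\cdot\nabla)^2v+\dots$ in \eqref{ori_oper} organises the operator as
$$
\partial_t-\mathcal{A}^\eps(t)=\eps^{-2}\big(\partial_s-\mathcal{A}(s)\big)-\eps^{-1}\mathcal{A}_1+\big(\partial_t-\mathcal{A}_2\big)+\dots ,
$$
where $\mathcal{A}(s)$ is the cell operator \eqref{def_cal_A} acting in $\xi$, and $\mathcal{A}_1,\mathcal{A}_2$ are first- and second-order operators in $x$ with $\mathbb T^{d+1}$-periodic coefficients built from the first and second moments of $a$. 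The $\eps^{-2}$ term vanishes because $u^0$ is $\xi$-independent; the $\eps^{-1}$ term vanishes exactly because $\chi$ solves \eqref{aux3}; and the $\eps^0$ term is annihilated by requiring $(\partial_s-\mathcal{A}(s))u_2=\mathcal{A}_1u_1+\mathcal{A}_2u^0-\partial_t u^0$. By Proposition \ref{p_auxi} and \eqref{nes_suf} this equation is solvable if and only if its right-hand side has zero mean over $\mathbb T^{d+1}$; a direct computation of that mean (the second-moment term producing $\tfrac12(\eta_i-\xi_i)(\eta_j-\xi_j)$ and the corrector term producing $-(\eta_i-\xi_i)\chi_j(\eta,s)$) reproduces exactly $\widehat a_{ij}$ of \eqref{hat_eff}, so the solvability condition reads $\partial_t u^0=\widehat a_{ij}\partial_{x_i}\partial_{x_j}u^0$, which upon contraction with the symmetric Hessian is \eqref{hom_eqn} with $a^{\rm eff}$ from \eqref{a_efff}.

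It then remains to estimate the residual $R^\eps=(\partial_t-\mathcal{A}^\eps(t))v^\eps$. By construction all nonpositive powers of $\eps$ cancel in $L^2(\mathbb R^d)$, leaving genuine $O(\eps)$ terms (such as $\eps\partial_t u_1$, $\eps\mathcal{A}_2 u_1$, $\eps\mathcal{A}_1 u_2$) controlled by the $L^2(\mathbb T^d)$ bounds \eqref{aux2} on $\chi$ and $u_2$ together with higher $x$-derivatives of $u^0$, plus the nonlocal Taylor remainders. Since $w^\eps=u^\eps-v^\eps$ satisfies $\partial_t w^\eps=\mathcal{A}^\eps(t)w^\eps-R^\eps$ with $\|w^\eps(\cdot,0)\|_{L^2}=O(\eps)$, the dissipativity above gives $\tfrac{d}{dt}\|w^\eps\|_{L^2}\leqslant\|R^\eps\|_{L^2}$, whence
$$
\|u^\eps-v^\eps\|_{L^\infty(0,T;L^2(\mathbb R^d))}\leqslant\|w^\eps(\cdot,0)\|_{L^2}+\int_0^T\|R^\eps(\cdot,t)\|_{L^2}\,dt ,
$$
and the right-hand side tends to $0$; since $\|v^\eps-u^0\|_{L^\infty(0,T;L^2)}=O(\eps)$, this yields $u^\eps\to u^0$.

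The main obstacle is the control of the nonlocal Taylor remainders under the sole moment assumption $\mathbf{C_2}$: a third-order expansion would require a finite third moment, which is not available, so I would instead bound the remainder by $\tfrac{\eps^2}2\int_{\mathbb R^d} a(z)|z|^2\,\omega(\eps|z|)\,dz$, where $\omega$ is the modulus of continuity of the relevant second derivatives, and pass to the limit by dominated convergence using $\int_{\mathbb R^d} a(z)|z|^2\,dz<\infty$. A second, conceptually new difficulty is that the correctors furnished by Proposition \ref{p_auxi} are only of class $L^\infty(0,1;L^2(\mathbb T^d))$---non-smooth in the fast spatial variable $\xi$ and governed by a time-periodic parabolic cell problem on $\mathbb T^{d+1}$---so every formal cancellation and every residual bound must be verified in $L^2$ rather than pointwise, with all fast-variable regularity absorbed through the bounded operator $\mathcal{A}(s)$ and the finite first moment of $a$ instead of through differentiation of $\chi$.
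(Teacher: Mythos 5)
Your proposal is correct and takes essentially the same route as the paper: the two-scale ansatz $u^0+\eps u_1+\eps^2 u_2$ with correctors supplied by Proposition \ref{p_auxi}, the effective matrix \eqref{hat_eff} emerging as the solvability (zero-mean) condition for the second cell problem, a dissipativity/energy estimate for the equation satisfied by $u^\eps-v^\eps$, and a density argument extending the result to all $u_0\in L^2(\mathbb R^d)$. Two minor points of comparison: your sign $u_1=-\chi\cdot\nabla u^0$ is indeed the one consistent with the convention in \eqref{aux3} (the paper's ansatz \eqref{ansatz} carries the opposite sign, a harmless inconsistency there), and your modulus-of-continuity/dominated-convergence treatment of the nonlocal Taylor remainder under $\mathbf{C_2}$ alone is exactly the estimate the paper delegates to \cite[Proposition 5]{PiaZhi17}.
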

 \begin{proof}
 We use the corrector approach and assume first that the function $u^0$ is smooth and that this function and its partial derivatives of any order decay at infinity. We introduce the following ansatz:
\begin{equation}\label{ansatz}
w^\eps(x,t)=u^0(x,t)+\eps\chi\Big(\frac x\eps,\frac t{\eps^2}\Big)\cdot\nabla u^0(x,t)+
\eps^2\varkappa\Big(\frac x\eps,\frac t{\eps^2}\Big)\cdot\nabla\nabla u^0(x,t);
\end{equation}
here and in what follows the notation $\nabla\nabla u^0$ stands for the matrix of partial derivatives of $u^0$ with respect
to the spatial variables: $\nabla\nabla u^0(x,t)=  \big\{\partial_{x_i}\partial_{x_j}   u^0(x,t)\big\}_{i,j=1}^d$.
Our goal is to choose a periodic vector-function $\chi$, a periodic matrix-function  $\varkappa$ and a matrix $a^{\mathrm{eff}}$
in such a way that the norm of the difference $(w^\eps-u^\eps)$ in the space $L^\infty(0,T;L^2(\mathbb R^d))$
tends to zero as $\eps\to0$. Recall that the vector-function $\chi$ and the matrix $a^{\mathrm{eff}}$ were already introduced in the
previous section.

Making a change of variables $(x,y)\to \big(x, \eps^{-1}(x-y)\big)$ and letting $z=\eps^{-1}(x-y)$ we can rewrite
formula  \eqref{ori_oper} as follows:
\begin{equation}\label{ori_oper_updated}
\big(\mathcal{A}^\eps(t) u\big)(x,t)=\frac1{\eps^{2}}\int_{\mathbb R^d}a\big(z\big)
\mu\Big(\frac{x}\eps,\frac x\eps-z, \
\frac t{\eps^2}\Big)\big(u(x-\eps z,t)-u(x,t)\big)dz,
\end{equation}
Expanding $u^0(x-\eps z,t)$ and $\nabla u^0(x-\eps z,t)$ into a Taylor series about $x$ we obtain
$$
u^0(x-\eps z,t)=u^0(x,t)-\eps z\cdot\nabla u^0(x,t)+\eps^2\int_0^1z_iz_j\partial_{x_i}\partial_{x_j}u^0(x-\eps\tau z,t)(1-\tau)d\tau,
$$
$$
\nabla u^0(x-\eps z,t)=\nabla u^0(x,t)-\eps\int_0^1z_j\partial_{x_j}\nabla u^0(x-\eps\tau z,t)d\tau.
$$
Combining the last two formulae with \eqref{ansatz} we derive the following expression for $w^\eps(x-\eps z, t)$:
$$
\begin{array}{ccc}
\displaystyle
  w^\eps(x-\eps z,t)= u^0(x,t)-\eps z\cdot\nabla u^0(x,t)+\eps^2\int_0^1z_iz_j\partial_{x_i}\partial_{x_j}u^0(x-\eps\tau z,t)(1-\tau)d\tau\\
  \displaystyle
  +\eps \chi\big(\frac x\eps - z,\frac t{\eps^2})\Big(\nabla u^0(x,t)-\eps\int_0^1z_j\partial_{x_j}\nabla u^0(x-\eps\tau z,t)d\tau\Big)
  +\eps^2\varkappa\big(\frac x\eps - z,\frac t{\eps^2})\nabla\nabla u^0(x,t).
\end{array}
$$
Our next goal is to compute $\partial_t w^\eps(x,t)- \mathcal{A}^\eps(t) w^\eps(x,t)$. Considering  \eqref{ori_oper_updated} and the last two relations,  after straightforward transformations we have
\begin{equation}\label{like_powers}
\begin{array}{ccc}
 \displaystyle
\partial_t w^\eps(x,t)- \mathcal{A}^\eps (t) w^\eps(x,t)=   \\[2mm]
 \displaystyle
  = \frac1\eps\nabla u^0(x,t)\cdot\Big\{\partial_s \chi\Big(\frac x\eps,s\Big)-\int_{\mathbb R^d}a(z)\mu\Big(\frac x\eps,\frac x\eps-z,s\Big)
  \Big[\chi\Big(\frac x\eps-z,s\Big)-\chi\Big(\frac x\eps,s\Big)-z\Big]dz\Big\}_{s=\frac t{\eps^2}}
   \\[4mm]\displaystyle
  +\partial_tu^0(x,t)+\nabla\nabla u^0(x,t)\cdot\Big\{\partial_s \varkappa\Big(\frac x\eps,s\Big)
  -\int_{\mathbb R^d}a(z)\mu\Big(\frac x\eps,\frac x\eps-z,s\Big)
  \Big[\varkappa\Big(\frac x\eps-z,s\Big)-\varkappa\Big(\frac x\eps,s\Big)\Big]dz\\[4mm]\displaystyle
  -\int_{\mathbb R^d}a(z)\mu\Big(\frac x\eps,\frac x\eps-z,s\Big) \Big[\frac12 z\otimes z-
  z\otimes\chi\Big(\frac x\eps-z,s\Big) \Big]dz \Big\}_{s=\frac t{\eps^2}} +R^\eps(x,t)
\end{array}
\end{equation}
with
$$
\begin{array}{cc}
\displaystyle
R^\eps(x,t)=\eps\chi\Big(\frac x\eps,\frac t{\eps^2}\Big)\partial_t\nabla u^0(x,t)+
\eps^2\varkappa\Big(\frac x\eps,\frac t{\eps^2}\Big)\partial_t\nabla\nabla u^0(x,t)\\[4mm]
\displaystyle
-\int_{\mathbb R^d}a(z)\mu\Big(\frac x\eps,\frac x\eps-z,\frac t{\eps^2}\Big)
z_iz_j\int_0^1\big(\partial_{x_i}\partial_{x_j}u^0(x-\eps\tau z,t)-\partial_{x_i}\partial_{x_j}u^0(x,t)\big)(1-\tau)d\tau dz\\[4mm]
\displaystyle
-\int_{\mathbb R^d}a(z)\mu\Big(\frac x\eps,\frac x\eps-z,\frac t{\eps^2}\Big)\chi_i\Big(\frac x\eps-z, \frac t{\eps^2}\Big)z_j
\int_0^1\big(\partial_{x_i}\partial_{x_j}u^0(x-\eps\tau z,t)-\partial_{x_i}\partial_{x_j}u^0(x,t)\big)d\tau dz;
\end{array}
$$
here the symbol $\otimes$ stands for the tensor product.
According to \cite[Proposition 5]{PiaZhi17}, for any $\varkappa\in L^2(\mathbb T^{d+1})$ and for any smooth $u^0$ whose derivatives of any order decay at infinity faster that any negative power of $|x|$, we have
$\|R^\eps\|_{L^2(\mathbb R^d\times[0,T])} \to 0$, as $\eps\to0$.
Due to equation \eqref{aux3} the first expression in figure brackets on the right-hand side of  \eqref{like_powers} vanishes.
Letting $u^0$ be a solution of equation  \eqref{hom_eqn} and recalling \eqref{hat_eff} we have $\partial_t u^0(x,t)=\widehat{a}\nabla\nabla u^0(x,t)$, and
\eqref{like_powers} can be rearranged as
\begin{equation}\label{like_powers_upd}
\begin{array}{ccc}
 \displaystyle
\partial_t w^\eps(x,t)- \mathcal{A}^\eps (t) w^\eps(x,t)=   \\[2mm]
 \displaystyle
  =  \nabla\nabla u^0(x,t)\cdot\bigg\{\partial_s \varkappa\Big(\frac x\eps,s\Big)
  -\int_{\mathbb R^d}a(z)\mu\Big(\frac x\eps,\frac x\eps-z,s\Big)
  \Big[\varkappa\Big(\frac x\eps-z,s\Big)-\varkappa\Big(\frac x\eps,s\Big)\Big]dz\\[4mm]\displaystyle
  -\int_{\mathbb R^d}a(z)\mu\Big(\frac x\eps,\frac x\eps-z,s\Big) \Big[\frac12 z\otimes z-
  z\otimes\chi\Big(\frac x\eps-z,s\Big) \Big]dz - \widehat a \bigg\}_{s=\frac t{\eps^2}} +R^\eps(x,t)
\end{array}
\end{equation}
In view of \eqref{hat_eff} the compatibility condition in the equation
$$
\partial_s\varkappa(\xi,s)-\int_{\mathbb R^d}a(z)\mu(\xi,\xi-z,s)\Big[\varkappa(\xi,s)-\varkappa(\xi-z,s)+\frac12
z\otimes z-z\otimes\chi(\xi-z,s)-\widehat{a}\Big]dz = 0
$$
is fulfilled, and thus this equation has a solution  $\varkappa\in (L^\infty(0,1;L^2(\mathbb T^d)))^{d^2}$. Inserting this solution to
\eqref{like_powers_upd} yields
$$
\partial_t w^\eps(x,t)- \mathcal{A}^\eps (t) w^\eps(x,t)= R^\eps(x,t).
$$
Then the difference $(w^\eps-u^\eps)$ satisfies the relations
$$
\begin{array}{c}
\partial_t (w^\eps-u^\eps)- \mathcal{A}^\eps (t) (w^\eps - u^\eps)= R^\eps, \\[2mm]
\displaystyle
 w^\eps(x,0)-u^\eps(x,0)=\eps\chi\Big(\frac x\eps,0\Big) \cdot \nabla u_0(x)+\eps\varkappa\Big(\frac x\eps,0\Big)\cdot \nabla\nabla u_0(x).
\end{array}
$$
If $u_0\in C_0^\infty(\mathbb R^d)$, then $\|w^\eps(\cdot,0)-u^\eps(\cdot,0)\|_{L^2(\mathbb R^d)}\to 0$ as $\eps\to0$,
and $\|R^\eps\|_{L^\infty(0,T;L^2(\mathbb R^d))}\to 0$.

In order to complete the proof of Theorem \ref{t_main1} we need a priori estimates. Consider in $\mathbb R^d\times(0,T]$ a  Cauchy problem
\begin{equation}\label{arb_cauchy}
  \partial_t v-\mathcal{A}^\eps(t)v=f(x,t),\quad v(x,0)=v_0.
\end{equation}
\begin{prop}\label{p_a_priori}
  For a solution $v^\eps$ of problem \eqref{arb_cauchy} the following estimate holds
\begin{equation}\label{prop2}
  \|v^\eps\|_{L^\infty(0,T;L^2(\mathbb R^d))}\leq C_4 \|f\|_{L^2(\mathbb R^d\times(0,T))}+\|v_0\|_{L^2(\mathbb R^d)}.
\end{equation}
  with a constant $C_4>0$ that does not depend on $\eps$.
\end{prop}
\begin{proof}
The proof of this statement is quite standard and follows the  line of the proof of Proposition 6.1 in \cite{PZ19}. For the reader convenience here we provide a sketch of the proof. Since problem \eqref{arb_cauchy} is linear its solution can be represented as the sum
$$
v^\eps(x,t) = v_1(x,t) + v_2(x,t),
$$
where $v_1$ and $v_2$ are solutions of  the problems
\begin{equation}\label{Prop2-1}
  \partial_t v_1 - \mathcal{A}^\eps(t)v_1 = 0,\quad v_1(x,0)=v_0,
\end{equation}
and 
\begin{equation}\label{Prop2-2}
  \partial_t v_2 - \mathcal{A}^\eps(t)v_2 = f(x,t),\quad v_2(x,0)=0,
\end{equation}
respectively.
Multiplying equation \eqref{Prop2-1} by $v_1(x,s)$ and integrating the resulting relation over $\mathbb{R}^d\times(0,t)$ we obtain
\begin{equation}\label{E1}
\| v_1(\cdot, t)\|_{L^2(\mathbb{R}^d)} \ \le \ \| v_0(\cdot)\|_{L^2(\mathbb{R}^d)} \quad \mbox{for any } \; t \in [0,T].
\end{equation}
Making similar computations in problem \eqref{Prop2-2} yields  
$$
\| v_2(\cdot, t)\|^2_{L^2(\mathbb{R}^d)} \ \le \ 2\, \Big| \int\limits_0^t \int\limits_{\mathbb{R}^d} f(x,s) v_2(x,s) dx ds \Big| \ \le \ \|f\|^2_{L^2( \mathbb{R}^d \times (0,T))} + \int\limits_0^t \| v_2(\cdot, s)\|^2_{L^2(\mathbb{R}^d)} ds.
$$
By the Gronwall inequality we have
\begin{equation}\label{E2}
\| v_2(\cdot, t)\|^2_{L^2(\mathbb{R}^d)} \ \le \ \|f\|^2_{ L^2(\mathbb{R}^d  \times (0,T))} \, e^T \quad \mbox{for any } \; t \in [0,T].
\end{equation}
Finally, \eqref{E1} and \eqref{E2} imply \eqref{prop2}.
\end{proof}
By this Proposition taking into account the relations $\|w^\eps(\cdot,0)-u^\eps(\cdot,0)\|_{L^2(\mathbb R^d)}\to 0$ and
$\|R^\eps\|_{L^\infty(0,T;l^2(\mathbb R^d))}\to 0$ we have $\|u^\eps-w^\eps\|_{L^\infty(0,T;L^2(\mathbb R^d))}\to 0$, as $\eps\to0$.
Considering the relation  $\|u^0-w^\eps\|_{L^\infty(0,T;L^2(\mathbb R^d))}\to 0$ that follows from the structure of
ansatz  \eqref{ansatz} we finally conclude that
$$
\lim\limits_{\eps\to0}\|u^\eps-u^0\|_{L^\infty(0,T;L^2(\mathbb R^d))}= 0.
$$
Approximating any $L^2$ initial function by a sequence of smooth functions and taking into account the a priori estimates proved in Proposition \ref{p_a_priori} and similar estimates for the limit problem we derive the desired statement of Theorem \ref{t_main1}.
 \end{proof}

\section{Acknowledgment}
This work was partially supported by the UiT Aurora project MASCOT and  Pure Mathematics in Norway grant.


\begin{thebibliography}{9}

\bibitem{BLP}
 Bensoussan A.,   Lions J.-L., Papanicolaou, G.,
 {\sl Asymptotic Analysis for Periodic Structures},
 North Holland,
Amsterdam, 1978.



\bibitem{Gra}
 Grafakos L.,
 {\sl Modern Fourier Analysis},
Springer, Berlin, 2014.

\bibitem{JKO}
 Jikov V.V., Kozlov, S.M.,  Oleinik, O.A.,
{\sl Homogenization of Differential Operators and
Integral Functionals},  Springer,  New York, 1994.



\bibitem{San16}
Sandri\'c, N.,  Homogenization of periodic diffusion with small jumps,
 {\sl J. Math. Anal. Appl.}, {\bf 435}(1)  pp. 551--577,  2016.

\bibitem{Schw10}
 Schwab R.,  Periodic homogenization for nonlinear integro-differential equations,
{\sl  SIAM J.Math. Anal.}, {\bf 42},  pp. 2652--2680,  2010.

\bibitem{Ari09} Arisawa M.,
 Homogenization of a class of integro-differential equations with L\'evy operators,
  {\sl Comm. Partial Differential Equations}, {\bf 34},  pp. 617--624, 2009.

\bibitem{RhVa09}  Rhodes R.,  Vargas V.,
Scaling limits for symmetric It\^o--L\'evy processes in random medium,
 {\sl Stochastic Process. Appl.}, {\bf 119},  pp. 4004--4033,  2009.

\bibitem{KPZ19}  Kassmann M., Piatnitski A., Zhizhina E.,
 Homogenization of nonsymmetric  L\'evy-type operators,
 {\sl SIAM J. Math. Anal.}, {\bf 51}(5),  pp.36--41,  2019;   doi:10.1137/18M1200038.

\bibitem{PiaZhi17}  Piatnitski A.,  Zhizhina E.,  Periodic homogenization of nonlocal operators
with a convolution-type kernel,
 {\sl SIAM J. Math. Anal.}, {\bf 49}(1), pp. 64--81, 2017.

\bibitem{PZ19}  Piatnitski A.,  Zhizhina E.,
Homogenization of biased convolution type operators,  {\sl Aasymptotic Analysis},
 {\bf 115}(3--4), pp. 241--262,  2019; doi:10.3233/ASY-191533.

\bibitem{RS} Riss F., Sekefalvi-Nad' B. Functional analysis. New York, Ungar Publ., 1955

\end{thebibliography}
\end{document}